\documentclass{amsart}

\usepackage{lmodern} 
\usepackage{microtype} 
\usepackage[english]{babel} 
\usepackage{hyperref} 
\usepackage{mathrsfs}

\theoremstyle{plain} 
\newtheorem{lemma}{Lemma} 
\newtheorem*{theoremA}{Theorem A} 
\newtheorem*{theoremB}{Theorem B} 
\newtheorem*{theoremC}{Theorem C}

\DeclareMathOperator{\mre}{Re}

\usepackage{xcolor}

\begin{document} 
\title[Correction to: Weak product spaces of Dirichlet series]{Correction to: \\ Weak product spaces of Dirichlet series} 
\date{\today} 

\author{Ole Fredrik Brevig} 
\address{Department of Mathematics, University of Oslo, 0851 Oslo, Norway} 
\email{obrevig@math.uio.no}

\author{Karl-Mikael Perfekt} 	
\address{Department of Mathematical Sciences, Norwegian University of Science and Technology (NTNU), NO-7491 Trondheim, Norway} 
\email{karl-mikael.perfekt@ntnu.no}
\begin{abstract}
	We correct the proof of Theorem~8 in [\emph{Weak product spaces of Dirichlet series}, Integral Equations Operator Theory \textbf{86} (2016), no.~4, 453--473].
\end{abstract}

\subjclass[2020]{Primary 47B35. Secondary 30B50.} 
\keywords{Dirichlet series, Hankel forms, Square function, Weak product space.}

\maketitle

\section{Introduction} 
The claim ``Since $1/(1+\tau^2)$ is a Muckenhoupt $A_q$-weight for all $q>1$\ldots'' on page 465 of \cite{BP16} is false. Recall (from e.g.~\cite[Section~VI.6]{Garnett07}) that $w$ is called a \emph{Muckenhoupt} $A_q$-weight whenever
\[\sup_I \left(\frac{1}{|I|} \int_I w \right)\left(\frac{1}{|I|} \int_I w^{-\frac{1}{q-1}} \right)^{q-1} < \infty,\]
where the supremum is taken over all finite intervals $I$. For $w(\tau) = 1/(1+\tau^2)$ and $I = [-T,T]$, we deduce from the estimate $1+\tau^2 \geq \tau^2$ that 
\[\left(\frac{1}{|I|} \int_I w \right)\left(\frac{1}{|I|} \int_I w^{-\frac{1}{q-1}} \right)^{q-1} \geq \left(\frac{q-1}{q+1}\right)^{q-1} T \arctan{T}.\]
Consequently, the use of the non-tangential maximal function in \cite{BP16} is incorrect, and Theorem~8 of \cite{BP16} is unsubstantiated. In this corrigendum we remedy the issue in the context of Hardy spaces of Dirichlet series, and thereby correct the proof of Theorem~8. These results will be presented as Theorem~A and B below. The same mistake as in \cite{BP16} is repeated in the lead-up to the proof of \cite[Theorem 5.1]{BPS19}, and this is also rectified by Theorem~A and Theorem~B.

Unless otherwise stated, we retain the notation from \cite{BP16}. Specifically, we set $\Gamma_\tau = \{s=\sigma+it \,:\, |t-\tau|<\sigma\}$ for $\tau$ in $\mathbb{R}$ and consider the maximal function 
\begin{equation}\label{eq:maxf} 
	F^\ast(\tau) = \sup_{s \in \Gamma_\tau} |F(s)|. 
\end{equation}
The following result yields a maximal function characterization of $\mathscr{H}^p$, the converse inequality being a trivial consequence of \cite[Lemma~5]{Bayart02}. 
\begin{theoremA}
	Fix $0<p<\infty$. There is a constant $C_p \geq 1 $ such that
	\[\int_{\mathbb{T}^\infty} \int_{\mathbb{R}} |f_\chi^\ast(\tau)|^p \frac{d\tau}{\pi(1+\tau^2)} \leq C_p \|f\|_{\mathscr{H}^p}^p\]
	for every $f$ in $\mathscr{H}^p$. 
\end{theoremA}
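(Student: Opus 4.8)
The plan is to reduce, by density of Dirichlet polynomials in $\mathscr{H}^p$ and Fatou's lemma, to the case $f=P$ a Dirichlet polynomial; the requisite inequality $f_\chi^\ast(\tau)\le\liminf_n(P_n)_\chi^\ast(\tau)$ for a.e.\ $\chi$ follows from the local uniform convergence on $\mathbb{C}_0:=\{\mre s>0\}$ of suitable approximating polynomials to $f_\chi$, valid for a.e.\ $\chi$ because the Dirichlet series of $f_\chi$ converges in $\mathbb{C}_0$ almost surely (cf.\ \cite{Bayart02}). For a Dirichlet polynomial $P$ each vertical translate $P_\chi(s)=\sum a_n\chi(n)n^{-s}$ is bounded and holomorphic on $\overline{\mathbb{C}_0}$, so $|P_\chi|^{p/2}$ is bounded and subharmonic on $\mathbb{C}_0$ and, by the Phragm\'en--Lindel\"of principle, is dominated by the Poisson integral of its boundary values:
\[
	|P_\chi(s)|^{p/2}\le\int_{\mathbb{R}}|P_\chi(iu)|^{p/2}\,\frac{\mre s}{\pi\big((\operatorname{Im}s-u)^{2}+(\mre s)^{2}\big)}\,du,\qquad s\in\mathbb{C}_0.
\]
A direct estimate of the Poisson kernel --- exploiting the bounded aperture $|\operatorname{Im}s-\tau|<\mre s$ of $\Gamma_\tau$, which keeps the kernel under control even for $s$ far out in the unbounded cone --- shows that the supremum over $s\in\Gamma_\tau$ of the right-hand side is at most $C$ times $M[g_\chi](\tau)$, where $M$ is the Hardy--Littlewood maximal operator on $\mathbb{R}$ and $g_\chi:=|P_\chi(i\cdot)|^{p/2}$. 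Since raising to the power $p/2$ commutes with the supremum over $\Gamma_\tau$, this gives the pointwise bound $P_\chi^\ast(\tau)^{p}\le C\big(M[g_\chi](\tau)\big)^{2}$ with $C$ an absolute constant.

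I would then transfer the estimate to the polytorus. Write $\widetilde{P}$ for the Bohr lift of $P$, so that $\|P\|_{\mathscr{H}^p}^p=\int_{\mathbb{T}^\infty}|\widetilde{P}|^{p}\,d\chi$ and $P_\chi(it)=\widetilde{P}(\Phi_t\chi)$, where $\Phi_u\colon\chi\mapsto\chi\cdot(p_j^{-iu})_{j\ge1}$ is the Kronecker flow, a one-parameter group of Haar-measure-preserving transformations of $\mathbb{T}^\infty$. Then $g_\chi(u)=|\widetilde{P}(\Phi_u\chi)|^{p/2}=g(\Phi_u\chi)$ with $g:=|\widetilde{P}|^{p/2}\in L^2(\mathbb{T}^\infty)$, so $M[g_\chi](\tau)=(\mathcal{M}g)(\Phi_\tau\chi)$, where $\mathcal{M}g(\chi):=\sup_{r>0}\tfrac{1}{2r}\int_{-r}^{r}g(\Phi_u\chi)\,du$. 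Integrating the pointwise bound over $\mathbb{T}^\infty\times\mathbb{R}$ and using Tonelli's theorem together with the invariance of Haar measure under $\Phi_\tau$, the inner integral $\int_{\mathbb{T}^\infty}\big((\mathcal{M}g)(\Phi_\tau\chi)\big)^2\,d\chi=\|\mathcal{M}g\|_{L^2(\mathbb{T}^\infty)}^2$ does not depend on $\tau$, and since $\int_{\mathbb{R}}\tfrac{d\tau}{\pi(1+\tau^2)}=1$ we are left with
\[
	\int_{\mathbb{T}^\infty}\int_{\mathbb{R}}|P_\chi^\ast(\tau)|^{p}\,\frac{d\tau}{\pi(1+\tau^2)}\,d\chi\le C\,\|\mathcal{M}g\|_{L^2(\mathbb{T}^\infty)}^2.
\]
Finally $\|\mathcal{M}g\|_{L^2(\mathbb{T}^\infty)}\le A_2\,\|g\|_{L^2(\mathbb{T}^\infty)}$ --- the $L^2$ maximal inequality for the $\mathbb{R}$-action $\{\Phi_u\}$, a standard consequence of the $L^2$-boundedness of $M$ via Calder\'on's transference principle (equivalently, the $L^2$ maximal ergodic theorem) --- and $\|g\|_{L^2(\mathbb{T}^\infty)}^2=\|P\|_{\mathscr{H}^p}^p$, which closes the argument with $C_p=CA_2^2$ (in fact independent of $p$).

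The decisive point, and the one requiring care, is the choice of the exponent $p/2$ in the subharmonic/Poisson step: this is what places $g=|\widetilde{P}|^{p/2}$ in $L^2(\mathbb{T}^\infty)$, the space on which $\mathcal{M}$ is bounded. Using $|P_\chi|^{p}$ directly would lead, after the same manipulations, to the maximal function of an $L^1(\mathbb{T}^\infty)$ function, for which transference supplies only a weak-type $(1,1)$ estimate --- insufficient, since collapsing the $\tau$-integral requires an $L^1(\mathbb{T}^\infty)$, not merely $L^{1,\infty}(\mathbb{T}^\infty)$, bound. It is precisely the failure of $\tau\mapsto 1/(1+\tau^2)$ to be a Muckenhoupt weight that forces one to average over $\chi$ before estimating the maximal function, rather than afterwards. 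The remaining ingredients are routine: verifying the Poisson domination and the maximal-function estimate uniformly up the unbounded cone $\Gamma_\tau$ (so that the behaviour of $f_\chi$ near $s=\infty$ needs no separate discussion) and making precise the approximation by Dirichlet polynomials.
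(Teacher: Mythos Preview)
Your argument is correct and takes a genuinely different route from the paper's. The paper introduces the auxiliary function $F_\chi(s)=f_\chi(s)/\big(\pi^{1/p}(s+1)^{2/p}\big)\in H^p(\mathbb{C}_0)$, applies the Burkholder--Gundy--Silverstein maximal theorem to $F_\chi$, and then compares $F_\chi^\ast$ with $f_\chi^\ast$ by splitting the cone $\Gamma_\tau$ into a bounded part (where $|s+1|^2\asymp 1+\tau^2$) and an unbounded part (handled by the pointwise estimate $|f(s)|^p\le\zeta(2\mre s)\|f\|_{\mathscr{H}^p}^p$), before averaging over $\chi$ via Lemma~\ref{lem:pnorm}. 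You instead work directly with Dirichlet polynomials, use the subharmonicity of $|P_\chi|^{p/2}$ and Poisson domination to reduce to the Hardy--Littlewood maximal function of $|P_\chi(i\cdot)|^{p/2}\in L^2(\mathbb{R})$, and then transfer to $\mathbb{T}^\infty$ via the Kronecker flow, invoking the $L^2$ ergodic maximal inequality. Your approach is more self-contained (it bypasses the BGS theorem and the cone decomposition) and yields a constant independent of $p$; the paper's approach, on the other hand, fits into a framework that also handles the square function of Theorem~B, where the derivative in \eqref{eq:Fchidiff} makes a direct transference argument less transparent. One point to execute carefully in your write-up is the passage from polynomials to general $f\in\mathscr{H}^p$: to get $f_\chi^\ast\le\liminf_n (P_n)_\chi^\ast$ almost surely you will want to pass to a subsequence along which $(P_n)_\chi\to f_\chi$ locally uniformly on $\mathbb{C}_0$, which is most cleanly obtained from Lemma~\ref{lem:pnorm} (convergence of $F_{n,\chi}$ to $F_\chi$ in $H^p(\mathbb{C}_0)$ for a.e.\ $\chi$ along a subsequence).
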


Consider next the square function 
\begin{equation}\label{eq:sqf} 
	Sf(\tau) = \left(\int_{\Gamma_\tau} |f'(s)|^2\,ds \right)^\frac{1}{2}.
\end{equation}
The following square function characterization of $\mathscr{H}^p$ is precisely \cite[Theorem 8]{BP16}.
\begin{theoremB}
	Fix $0<p<\infty$. There are constants $A_p>0$ and $B_p>0$ such that
	\[A_p \|f\|_{\mathscr{H}^p}^p \leq \int_{\mathbb{T}^\infty } \int_{\mathbb{R}} (S f_\chi(\tau))^p\,\frac{d\tau}{\pi(1+\tau^2)}\,dm_\infty(\chi) \leq B_p \|f\|_{\mathscr{H}^p}^p\]
	for every $f$ in $\mathscr{H}^p$ with $f(+\infty)=0$. 
\end{theoremB}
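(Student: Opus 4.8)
The plan is to deduce Theorem~B from Theorem~A by means of the classical one‑variable Littlewood--Paley theory, using an averaging over $\mathbb{T}^\infty$ to circumvent the fact that $1/(1+\tau^2)$ lies in no Muckenhoupt class. The decisive device — the one replacing the erroneous $A_q$ argument — is a symmetrization identity. For $\theta\in\mathbb{R}$ let $\chi_\theta\in\mathbb{T}^\infty$ be the character $n\mapsto n^{-i\theta}$. Then $f_{\chi\chi_\theta}(s)=f_\chi(s+i\theta)$, so that $Sf_{\chi\chi_\theta}(\tau)=Sf_\chi(\tau+\theta)$ and $f^\ast_{\chi\chi_\theta}(\tau)=f^\ast_\chi(\tau+\theta)$, and since $m_\infty$ is translation invariant the functions
\[\tau\longmapsto\int_{\mathbb{T}^\infty}(Sf_\chi(\tau))^p\,dm_\infty(\chi)\qquad\text{and}\qquad\tau\longmapsto\int_{\mathbb{T}^\infty}(f^\ast_\chi(\tau))^p\,dm_\infty(\chi)\]
are \emph{constant}. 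Because $d\tau/(\pi(1+\tau^2))$ is a probability measure, this forces
\[\int_{\mathbb{T}^\infty}\int_{\mathbb{R}}(Sf_\chi(\tau))^p\,\frac{d\tau}{\pi(1+\tau^2)}\,dm_\infty(\chi)=\int_{\mathbb{T}^\infty}\int_0^1(Sf_\chi(\tau))^p\,d\tau\,dm_\infty(\chi)\]
(and likewise for $f^\ast_\chi$), and more generally the weight may be traded for normalized Lebesgue measure on any fixed bounded interval. Thus the whole problem is transported to a bounded interval carrying Lebesgue measure, where the classical machinery is available. (When $p=2$ both sides are even computable: integrating first in $\chi$ by orthonormality of the $n^{-it}$, then in $t$ using $\int_{\mathbb{R}}\mathbf{1}_{|t-\tau|<\sigma}\,dt=2\sigma$ and that $m_\infty$, $\mu$ are probability measures, then in $\sigma$ using $\int_0^\infty 2\sigma\,n^{-2\sigma}\,d\sigma=\tfrac1{2}(\log n)^{-2}$, one obtains the value $\tfrac12\|f\|_{\mathscr H^2}^2$; in particular $A_2=B_2=\tfrac12$.)

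Since the cones $\Gamma_\tau$ are unbounded, the next step is to cut off their far parts. Fix a large constant $M=M(p)$, let $\Gamma^{\mathrm{near}}_\tau=\Gamma_\tau\cap\{0<\mre s<M\}$, and split $(Sf_\chi)^2=(S^{\mathrm{near}}f_\chi)^2+(S^{\mathrm{far}}f_\chi)^2$ and $f^\ast_\chi=\max(f^{\ast,\mathrm{near}}_\chi,f^{\ast,\mathrm{far}}_\chi)$. On $\{\mre s\ge M\}$ the crude bounds $f^{\ast,\mathrm{far}}_\chi(\tau)\le\sum_{n\ge2}|a_n|n^{-M}$ and $S^{\mathrm{far}}f_\chi(\tau)\ll_p 2^{-M/2}\|f\|_{\mathscr H^p}$ hold uniformly in $\chi$ and in bounded $\tau$, by elementary coefficient estimates and the decay of $n^{-\sigma}$; the point is that, because $f(+\infty)=0$, the right‑hand sides tend to $0$ as $M\to\infty$, so once $M$ is fixed large enough (in terms of $p$ and the constant of Theorem~A) the far contributions will be absorbable. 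For the near parts one applies the classical local Littlewood--Paley inequalities on the box $\{\,\sigma+it:0<\sigma<M,\ |t|<M+2\,\}$: for each $\chi$,
\[\int_0^1(S^{\mathrm{near}}f_\chi)^p\,d\tau\ll_M\int_{-M-2}^{M+2}(f^{\ast,\mathrm{near}}_\chi)^p\,d\tau\]
and
\[\int_0^1(f^{\ast,\mathrm{near}}_\chi)^p\,d\tau\ll_M\int_{-M-2}^{M+2}(S^{\mathrm{near}}f_\chi)^p\,d\tau+\Big(\tfrac{1}{2(M+2)}\int_{-M-2}^{M+2}\big|f_\chi\big(\tfrac{M}{2}+it\big)\big|\,dt\Big)^{p},\]
the reference term in the second inequality being forced since the square function ignores constants. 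Writing $f_\chi(\tfrac{M}{2}+it)=-\int_{M/2}^\infty f'_\chi(u+it)\,du$ (here $f(+\infty)=0$ enters again), Cauchy--Schwarz together with the subharmonicity of $|f'_\chi|^2$ bounds the reference term by $\tfrac{1}{2(M+2)}\int_{-M-2}^{M+2}(S^{(a)}f_\chi(t))^p\,dt+C_p2^{-Mp}\|f\|_{\mathscr H^p}^p$ for a slightly larger aperture $a$; after integrating in $\chi$, symmetrizing, and using aperture independence of the square function, the first piece is $\ll\int_{\mathbb{T}^\infty}\int_{\mathbb{R}}(Sf_\chi)^p\,\frac{d\tau}{\pi(1+\tau^2)}\,dm_\infty$ and the second is again absorbed.

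Assembling: for the upper estimate, the cone split, the first local inequality, integration in $\chi$, the symmetrization identity, and Theorem~A give $\int_{\mathbb{T}^\infty}\int_{\mathbb{R}}(Sf_\chi)^p\,\frac{d\tau}{\pi(1+\tau^2)}\,dm_\infty\le B_p\|f\|^p_{\mathscr H^p}$. For the lower estimate, the analogous bookkeeping with the second local inequality produces
\[\int_{\mathbb{T}^\infty}\int_{\mathbb{R}}(f^\ast_\chi)^p\,\frac{d\tau}{\pi(1+\tau^2)}\,dm_\infty\ll\int_{\mathbb{T}^\infty}\int_{\mathbb{R}}(Sf_\chi)^p\,\frac{d\tau}{\pi(1+\tau^2)}\,dm_\infty+\tfrac12\int_{\mathbb{T}^\infty}\int_{\mathbb{R}}(f^\ast_\chi)^p\,\frac{d\tau}{\pi(1+\tau^2)}\,dm_\infty,\]
where the last term collects the far and reference contributions; absorbing it and then using the elementary converse of Theorem~A from \cite[Lemma~5]{Bayart02} yields $A_p\|f\|^p_{\mathscr H^p}\le\int_{\mathbb{T}^\infty}\int_{\mathbb{R}}(Sf_\chi)^p\,\frac{d\tau}{\pi(1+\tau^2)}\,dm_\infty$.

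The main obstacle is precisely the point where the published proof broke down: for a single $\chi$ the functions $Sf_\chi$ and $f^\ast_\chi$ cannot be compared in $L^p$ of the weight $1/(1+\tau^2)$, and the averaging over $\mathbb{T}^\infty$ is the mechanism converting that forbidden weight into Lebesgue measure on a bounded interval. The residual difficulty is the unboundedness of the cones: the local Littlewood--Paley inequalities must be used together with their far and reference terms, and one has to be careful to absorb those into the square function and into an arbitrarily small multiple of $\|f\|^p_{\mathscr H^p}$ — \emph{not} into $\|f\|^p_{\mathscr H^p}$ itself, otherwise the lower estimate degenerates to a triviality. It is here that the hypothesis $f(+\infty)=0$ is indispensable, since it both kills the far parts as $M\to\infty$ and lets one dominate the reference term by the square function.
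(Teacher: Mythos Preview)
Your approach is genuinely different from the paper's and rests on an elegant observation the paper does not exploit: translation invariance of Haar measure under the Kronecker flow $\chi\mapsto\chi\chi_\theta$ forces $\tau\mapsto\int_{\mathbb{T}^\infty}(Sf_\chi(\tau))^p\,dm_\infty$ (and its maximal-function analogue) to be constant, so the problematic weight $(1+\tau^2)^{-1}$ simply disappears. The paper instead multiplies $f_\chi$ by the conformal factor $\pi^{-1/p}(s+1)^{-2/p}$ to land in the unweighted space $H^p(\mathbb{C}_0)$, applies the global Fefferman--Stein characterization there, and then has to undo the damage from the extra term $-(2/p)f_\chi(s)/(s+1)$ in \eqref{eq:Fchidiff}: for the upper bound this is controlled by Theorem~A, while the lower bound goes through a compactness/contradiction argument (Lemma~\ref{lem:contradictme}) built on Lemma~\ref{lem:Rpf} and Theorem~C. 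Your route avoids that compactness argument entirely and is more conceptual, but as written it is considerably sketchier: the ``local Littlewood--Paley inequalities on a box'' you invoke for all $0<p<\infty$ are not off-the-shelf statements and would need a precise formulation and derivation from the good-$\lambda$ inequalities; the averaged form of your reference term causes trouble for $p<1$ (a single point value $|f_\chi(z_0)|^p$ is cleaner, and in fact $f(+\infty)=0$ plus the coefficient bounds already give $|f_\chi(M/2+it)|^p\ll 2^{-cMp}\|f\|_{\mathscr{H}^p}^p$ directly, so the detour through $S^{(a)}$ and aperture independence is unnecessary); and you should verify that the local Littlewood--Paley constant's dependence on $M$ does not defeat the absorption step. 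The paper's argument, while less direct, is fully quantitative and produces Theorem~C as an ingredient of independent interest.
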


We shall establish Theorem~A and Theorem~B by replacing the conformally invariant Hardy spaces $H^p_{\operatorname{i}}(\mathbb{C}_0)$ considered in \cite{BP16} with the (unweighted) Hardy spaces $H^p(\mathbb{C}_0)$. For a Dirichlet series $f$ in $\mathscr{H}^p$ and a character $\chi$ in $\mathbb{T}^\infty$, consider 
\begin{equation}\label{eq:Fchi} 
	F_\chi(s) = \frac{f_\chi(s)}{\pi^{1/p}(s+1)^{2/p}}. 
\end{equation}
It is well-known that if $f$ is in $\mathscr{H}^p$, then for almost every character $\chi$ in $\mathbb{T}^\infty$ the Dirichlet series $f_\chi$ is convergent in $\mathbb{C}_0$ and $F_\chi$ is in $H^p(\mathbb{C}_0)$. Moreover, $\|f\|_{\mathscr{H}^p}^p$ can be recovered by averaging $\|F_\chi\|_{H^p(\mathbb{C}_0)}^p$ over $\mathbb{T}^\infty$. See Lemma~\ref{lem:pnorm} below. Our arguments will be based on this, the maximal function characterization of $H^p(\mathbb{C}_0)$ due to Burkholder, Gundy and Silverstein \cite{BGS71}, and the square function characterization of $H^p(\mathbb{C}_0)$ due to Fefferman and Stein \cite{FS72}.

It is indeed straightforward to establish Theorem~A by comparing the maximal functions $f_\chi^\ast$ and $F_\chi^\ast$. However, for Theorem~B, the presence of a derivative in \eqref{eq:sqf} means that we need to compare the $L^2(\Gamma_\tau)$ integrals of $f_\chi'$ and 
\begin{equation}\label{eq:Fchidiff} 
	F_\chi'(s) = \frac{f_\chi'(s)}{\pi^{1/p}(s+1)^{2/p}} - \frac{2}{p} \frac{f_\chi(s)}{\pi^{1/p}(s+1)^{2/p+1}}. 
\end{equation}
One clearly expects the first term to be dominant, so our job is to prove that the second term can be disregarded. This will be achieved through a compactness argument which proceeds along standard lines. 

Certain technical issues arise since we are working with Hardy spaces of Dirichlet series. We rely mostly on the standard toolbox of $\mathscr{H}^p$ developed in \cite{Bayart02}, but one of our preliminary results does not appear in the literature and may be of some independent interest. To state it, we introduce the notation $T_\sigma f(s) = f(s+\sigma)$. 
\begin{theoremC}
	Fix $0<p<\infty$. If $f(s) = \sum_{n\geq N} a_n n^{-s}$ for some $N\geq1$, then
	\[\|T_\sigma f\|_{\mathscr{H}^p} \leq N^{-\sigma} \|f\|_{\mathscr{H}^p}\]
	for every $\sigma\geq0$. 
\end{theoremC}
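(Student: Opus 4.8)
The plan is to transfer the statement to the infinite polytorus via the Bohr lift and then deduce it from a soft subharmonicity argument that is insensitive to $p$. Since the coefficient functionals $g \mapsto a_n$ are bounded on $\mathscr{H}^p$ for every $0 < p < \infty$, the subspace of $\mathscr{H}^p$ consisting of series supported on $\{n \geq N\}$ is the closure of the Dirichlet polynomials it contains, and the asserted inequality passes to $\mathscr{H}^p$-limits (once it is known for polynomials, $T_\sigma$ extends to a bounded operator on that subspace); I may thus assume $f(s) = \sum_{n=N}^{M} a_n n^{-s}$. Recall that the Bohr lift sends $n^{-s}$ to the monomial $z^{\alpha(n)}$, where $\alpha(n) \in \mathbb{Z}^\infty$ records the exponents in the prime factorisation of $n$, and identifies $\|g\|_{\mathscr{H}^p}$ with the $L^p(\mathbb{T}^\infty)$-norm of the corresponding function; also, multiplication by any monomial $z^{\beta}$ is an isometry of $L^p(\mathbb{T}^\infty)$. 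Writing $n^{-\sigma} = N^{-\sigma}(n/N)^{-\sigma}$ and factoring out $z^{\alpha(N)}$ therefore gives
\[ \|T_\sigma f\|_{\mathscr{H}^p} = N^{-\sigma}\left\| \sum_{n=N}^{M} a_n (n/N)^{-\sigma}\, z^{\gamma(n)} \right\|_{L^p(\mathbb{T}^\infty)} =: N^{-\sigma}\|h_\sigma\|_{L^p(\mathbb{T}^\infty)}, \qquad \gamma(n) := \alpha(n) - \alpha(N), \]
and in particular $\|f\|_{\mathscr{H}^p} = \|h_0\|_{L^p(\mathbb{T}^\infty)}$. The theorem thus reduces to $\|h_\sigma\|_{L^p(\mathbb{T}^\infty)} \leq \|h_0\|_{L^p(\mathbb{T}^\infty)}$. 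The sole role of the hypothesis $n \geq N$ is that it forces $\lambda_n := \log(n/N) \geq 0$ for every surviving term, so that $(n/N)^{-\sigma} = e^{-\sigma\lambda_n}$ with $\lambda_n \geq 0$.

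For the remaining inequality, put $H(\zeta, z) := \sum_{n=N}^{M} a_n e^{-\zeta\lambda_n}\, z^{\gamma(n)}$, an entire function of $\zeta$ for each fixed $z \in \mathbb{T}^\infty$, with $H(0, \cdot) = h_0$ and $H(\sigma, \cdot) = h_\sigma$, and consider $\Psi(\zeta) := \int_{\mathbb{T}^\infty} |H(\zeta, z)|^p \, dm_\infty(z)$. For any $p > 0$ the function $\zeta \mapsto |H(\zeta, z)|^p$ is subharmonic, hence so is $\Psi$; moreover the substitution $z_j \mapsto p_j^{-it} z_j$ preserves $m_\infty$ and carries $H(\sigma + it, z)$ to $H(\sigma, (p_j^{-it} z_j)_j)$, so $\Psi(\zeta)$ depends only on $\mre\zeta$ and is therefore a convex function of it. Since $\lambda_n \geq 0$ we have $|H(\sigma, z)| \leq \sum_n |a_n|$ for all $\sigma \geq 0$, so this convex function is bounded on $(0, \infty)$ and hence non-increasing there; as also $\Psi(\sigma) \to \|h_0\|_{L^p}^p$ when $\sigma \to 0^+$ (the finitely many terms converge uniformly), we get $\|h_\sigma\|_{L^p}^p = \Psi(\sigma) \leq \|h_0\|_{L^p}^p$, as desired.

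The substance is the reduction in the first paragraph: the factor $N^{-\sigma}$ is exactly what is liberated by peeling off the monomial $z^{\alpha(N)}$, and what is left over asserts only that translation into the half-plane attached to the Kronecker flow $t \mapsto (p_j^{-it})_j$ contracts $H^p$ for every $0 < p < \infty$ --- which, as just seen, follows softly from subharmonicity and rotation-invariance. I expect locating this reformulation to be the main obstacle; a more computational route disposes of the even integers $p = 2k$ directly, using that the coefficients of $f^k$ vanish below $N^k$ and that $\|\cdot\|_{\mathscr{H}^{2k}}^{2k}$ is the squared $\ell^2$-norm of those coefficients, but interpolation from there reaches only $p \in [2, \infty)$.
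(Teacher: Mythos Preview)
Your argument is correct, but it follows a genuinely different route from the paper's. The paper invokes, as a black box, the Hardy-type log-convexity of $\sigma\mapsto\|T_\sigma f\|_{\mathscr{H}^p}$ established in \cite{BP21}: applying it at $0,\sigma,\kappa$ and using the crude estimate $\|T_\kappa f\|_{\mathscr{H}^p}\leq 2|a_N|N^{-\kappa}$ for large $\kappa$ yields the bound after letting $\kappa\to\infty$. You instead work on the polytorus side, where the key observation is that multiplication by the unimodular monomial $z^{\alpha(N)}$ is an $L^p$-isometry; peeling it off liberates the factor $N^{-\sigma}$ and reduces the statement to contractivity of the translation semigroup for a generalized Dirichlet series with non-negative frequencies $\log(n/N)$, which you then prove from scratch via subharmonicity and rotation-invariance (yielding convexity of $\Psi$) together with boundedness on $(0,\infty)$ (yielding monotonicity).

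Conceptually the two proofs are close --- your subharmonicity-plus-invariance step is essentially a direct proof of the convexity theorem the paper cites, specialized to the situation at hand --- but the packaging differs. Your approach is self-contained and makes the origin of the factor $N^{-\sigma}$ transparent (it is exactly the contribution of the peeled-off monomial), at the cost of the density reduction and the polytorus bookkeeping. The paper's approach is shorter once the external convexity result is granted, and never leaves the Dirichlet-series picture. One small remark: in your density reduction you implicitly use that $T_\sigma$ is already known to be continuous on $\mathscr{H}^p$ (so that $T_\sigma P_k\to T_\sigma f$); this is standard, but it would be cleaner to say so explicitly rather than to phrase it as ``once it is known for polynomials, $T_\sigma$ extends to a bounded operator.''
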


Theorem~C is a direct consequence of the analogue of Hardy's convexity theorem for $\mathscr{H}^p$, which was recently established by the present authors in \cite{BP21} by adapting Hardy's original proof \cite{Hardy15}.

\subsection*{Organization} The present corrigendum is comprised of three additional sections. Section~\ref{sec:prelim} contains some preliminary results and definitions, including the proof of Theorem~C. Section~\ref{sec:A} and Section~\ref{sec:B} are devoted to the proofs of Theorem~A and Theorem~B, respectively. 

\subsection*{Acknowledgements} We thank Jiale Chen for making us aware of the error in \cite{BP16}.

\section{Preliminaries} \label{sec:prelim} 
For $0<p<\infty$, every Dirichlet series in $\mathscr{H}^p$ is absolutely convergent for $\mre{s}>1/2$. As elucidated in \cite[Section 2.1]{Bayart02}, it therefore follows from Birkhoff's ergodic theorem that the $\mathscr{H}^p$-norm of $T_\sigma f$ can computed using Carlson's formula whenever $\sigma>1/2$, 
\begin{equation}\label{eq:carlson} 
	\|T_\sigma f\|_{\mathscr{H}^p}^p = \lim_{T\to\infty} \frac{1}{2T} \int_{-T}^T |f(\sigma+it)|^p \,dt. 
\end{equation}
We are now ready to establish Theorem~C.
\begin{proof}
	[Proof of Theorem~C] We assume that $a_N\neq0$ because if not, we would get a stronger inequality by replacing $N$ by $N+1$. If $\sigma=0$ there is nothing to do. For $\sigma>0$ we will use Hardy's convexity theorem for $\mathscr{H}^p$ \cite[Theorem 3.8 (ii)]{BP21}, which states that the function $\sigma \mapsto \|T_\sigma f\|_{\mathscr{H}^p}$ is logarithmically convex on $[0,\infty)$. Applying this to the points $0$, $\sigma$ and $\kappa>\sigma$ we find that
	\[\|T_\sigma f \|_{\mathscr{H}^p} \leq \|f\|_{\mathscr{H}^p}^{1-\sigma/\kappa} \|T_\kappa f \|_{\mathscr{H}^p}^{\sigma/\kappa}.\]
	Our plan is now to estimate the final quantity as $\kappa \to \infty$. If we assume that $\kappa>1/2$, then we can use \eqref{eq:carlson} to compute $\|T_\kappa f \|_{\mathscr{H}^p}^p$. The absolute convergence and the assumption that $a_N \neq 0$ also imply that there is some $\kappa_0>1/2$ such that if $\kappa \geq \kappa_0$, then
	\[|f(\kappa+it)| \leq \sum_{n=N}^\infty \frac{|a_n|}{n^{\kappa}} \leq 2 |a_N| N^{-\kappa}.\]
	Combining these observations, we find that $\|T_\sigma f \|_{\mathscr{H}^p} \leq \|f\|_{\mathscr{H}^p}^{1-\sigma/\kappa} N^{-\sigma} (2|a_N|)^{\sigma/\kappa}$ for every $\kappa\geq\kappa_0$. Letting $\kappa\to \infty$ we obtain the stated result. 
\end{proof}

For $0<p<\infty$, let $H^p(\mathbb{C}_0)$ be the Hardy space of analytic functions in the right half-plane $\mathbb{C}_0 = \{s=\sigma+it\,:\, \sigma>0\}$, endowed with the (quasi-)norm 
\begin{equation}\label{eq:Hp} 
	\|F\|_{H^p(\mathbb{C}_0)}^p = \sup_{\sigma>0} \int_{-\infty}^\infty |F(\sigma+it)|^p\,dt. 
\end{equation}
It follows from Hardy's convexity theorem \cite{Hardy15} that the $\sup$ in \eqref{eq:Hp} can be replaced by the limit as $\sigma \to 0^+$. In combination with results extracted from \cite[Section~2.2]{Bayart02}, we have the following.
\begin{lemma}\label{lem:pnorm} 
	Fix $0<p<\infty$ and suppose that $f$ is in $\mathscr{H}^p$. For almost every $\chi$ in $\mathbb{T}^\infty$, the Dirichlet series $f_\chi$ converges in $\mathbb{C}_0$ and $F_\chi$ is in $H^p(\mathbb{C}_0)$. Moreover,
	\[\|f\|_{\mathscr{H}^p}^p = \int_{\mathbb{T}^\infty} \|F_\chi\|_{H^p(\mathbb{C}_0)}^p \,dm_\infty(\chi).\]
\end{lemma}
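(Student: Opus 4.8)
The plan is to reduce the identity to Bayart's ergodic description of $\mathscr{H}^p$ by an elementary change of variables on vertical lines. I will take three facts from the standard toolbox of $\mathscr{H}^p$ developed in \cite{Bayart02}: for almost every $\chi$ the Dirichlet series $f_\chi$ converges, hence is analytic, in $\mathbb{C}_0$; for almost every $\chi$ the function $F_\chi$ from \eqref{eq:Fchi} lies in $H^p(\mathbb{C}_0)$; and, for any $g$ in $\mathscr{H}^p$, the vertical boundary function $g^{\mathrm{b}}(\chi):=\lim_{\delta\to0^+}g_\chi(\delta)$ exists for almost every $\chi$ and $\|g\|_{\mathscr{H}^p}^p=\int_{\mathbb{T}^\infty}|g^{\mathrm{b}}(\chi)|^p\,dm_\infty(\chi)$. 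From the half-plane side I only use the remark recorded just before the statement: for $F$ in $H^p(\mathbb{C}_0)$ one has $\int_{\mathbb{R}}|F(\sigma+it)|^p\,dt\le\|F\|_{H^p(\mathbb{C}_0)}^p$ for every $\sigma>0$, with equality in the limit $\sigma\to0^+$.

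The first step is the auxiliary identity
\[
\int_{\mathbb{T}^\infty}|f_\chi(\sigma+it)|^p\,dm_\infty(\chi)=\|T_\sigma f\|_{\mathscr{H}^p}^p \qquad(\sigma>0,\ t\in\mathbb{R}).
\]
Independence of $t$ follows from the invariance of $m_\infty$ under the rotation $\chi\mapsto\chi\chi_t$, where $\chi_t$ is the completely multiplicative character with $\chi_t(p)=p^{-it}$ on primes, since $f_\chi(\sigma+it)=f_{\chi\chi_t}(\sigma)$. For $\sigma>1/2$ the Dirichlet series converges uniformly on $\mathbb{T}^\infty$, so $\chi\mapsto|f_\chi(\sigma)|^p$ is continuous, and by the unique ergodicity of the Kronecker flow $t\mapsto\chi_t$ the right-hand side of \eqref{eq:carlson} equals $\int_{\mathbb{T}^\infty}|f_\chi(\sigma)|^p\,dm_\infty(\chi)$. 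For $0<\sigma\le1/2$ I would instead apply the third imported fact to $g=T_\sigma f\in\mathscr{H}^p$: since $(T_\sigma f)_\chi(\delta)=f_\chi(\sigma+\delta)$ and $f_\chi$ is analytic in $\mathbb{C}_0$ for almost every $\chi$, one has $(T_\sigma f)^{\mathrm{b}}(\chi)=f_\chi(\sigma)$, whence the identity. Next, fixing $\sigma>0$ and using \eqref{eq:Fchi} in the form $|F_\chi(\sigma+it)|^p=|f_\chi(\sigma+it)|^p/\bigl(\pi((\sigma+1)^2+t^2)\bigr)$, Tonelli's theorem together with the auxiliary identity gives
\[
\int_{\mathbb{T}^\infty}\int_{\mathbb{R}}|F_\chi(\sigma+it)|^p\,dt\,dm_\infty(\chi)=\frac{\|T_\sigma f\|_{\mathscr{H}^p}^p}{\pi}\int_{\mathbb{R}}\frac{dt}{(\sigma+1)^2+t^2}=\frac{\|T_\sigma f\|_{\mathscr{H}^p}^p}{\sigma+1}.
\]
Finally, a passage to the limit $\sigma\to0^+$ completes the proof. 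For the inequality $\ge$, the term-by-term bound $\int_{\mathbb{R}}|F_\chi(\sigma+it)|^p\,dt\le\|F_\chi\|_{H^p(\mathbb{C}_0)}^p$ yields $\|T_\sigma f\|_{\mathscr{H}^p}^p/(\sigma+1)\le\int_{\mathbb{T}^\infty}\|F_\chi\|_{H^p(\mathbb{C}_0)}^p\,dm_\infty(\chi)$ for every $\sigma>0$; for the inequality $\le$, Fatou's lemma along a sequence $\sigma_k\to0^+$ yields $\int_{\mathbb{T}^\infty}\|F_\chi\|_{H^p(\mathbb{C}_0)}^p\,dm_\infty(\chi)\le\liminf_k\|T_{\sigma_k}f\|_{\mathscr{H}^p}^p/(\sigma_k+1)$. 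Since $\|T_\sigma f\|_{\mathscr{H}^p}\to\|f\|_{\mathscr{H}^p}$ as $\sigma\to0^+$ --- because $T_\sigma f\to f$ in $\mathscr{H}^p$, or by Hardy's convexity theorem for $\mathscr{H}^p$ \cite{BP21} --- both estimates meet at $\|f\|_{\mathscr{H}^p}^p$.

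The change of variables and the interchange of integrals are routine, and the substance of the argument is imported from \cite{Bayart02}. The step that calls for genuine care is the auxiliary identity on the range $0<\sigma\le1/2$: there neither Carlson's formula nor the ergodic theorem applies directly to $\chi\mapsto f_\chi(\sigma)$, and one must instead view this function as the vertical boundary function of the translate $T_\sigma f\in\mathscr{H}^p$. An alternative that bypasses the concluding limit argument is to work with boundary functions on the imaginary axis and compute $\|F_\chi\|_{H^p(\mathbb{C}_0)}^p$ directly; but matching the boundary function of $F_\chi$ with the conformal rescaling of that of $f_\chi$ then costs a Fubini argument over $\mathbb{T}^\infty\times\mathbb{R}$, so little is gained.
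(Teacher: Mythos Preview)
The paper does not supply a proof of this lemma, only the remark that it follows by combining Hardy's convexity theorem for $H^p(\mathbb{C}_0)$ with results extracted from \cite[Section~2.2]{Bayart02}. Your argument is correct and is precisely a careful unpacking of that citation: you import the almost-everywhere convergence and boundary-value facts from Bayart, use rotation invariance of $m_\infty$ together with Tonelli to obtain the vertical-line identity, and then pass to the limit $\sigma\to0^+$ via Hardy's convexity and Fatou---exactly the route the paper indicates.
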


We have one more trivial preliminary result, which is stated separately for ease of reference, without proof.
\begin{lemma}\label{lem:trivial} 
	Decompose $\Gamma_\tau = \Gamma_{\tau,0} \cup \Gamma_{\tau,1}$, where
	\[\Gamma_{\tau,0} = \{s = \sigma+it \,:\, 0 < \sigma < 1 \text{ and } |t-\tau|<\sigma\}\]
	and $\Gamma_{\tau,1} = \Gamma_\tau \setminus \Gamma_{\tau,0}$. 
	\begin{enumerate}
		\item[(a)] If $s$ is in $\Gamma_\tau$, then $|s+1|^2 \geq (1+\tau^2)/2$. 
		\item[(b)] If $s$ is in $\Gamma_{\tau,0}$, then $|s+1|^2 \leq 6(1+\tau^2)$. 
	\end{enumerate}
\end{lemma}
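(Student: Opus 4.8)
The plan is to estimate $|s+1|^2 = (\sigma+1)^2 + t^2$ directly from the defining inequality $|t-\tau| < \sigma$ of $\Gamma_\tau$, by comparing the size of $t$ with the size of $\tau$. For part (a), I would start from the triangle inequality $|\tau| \leq |t| + |t-\tau| < |t| + \sigma$, so that $|\tau| - \sigma < |t|$ and hence $\tau^2 < (|t|+\sigma)^2 \leq 2t^2 + 2\sigma^2 \leq 2t^2 + 2(\sigma+1)^2$, using $2ab \leq a^2 + b^2$. Adding $1 \leq 2(\sigma+1)^2$ (valid since $\sigma > 0$) gives $1 + \tau^2 < 2t^2 + 4(\sigma+1)^2 \leq 4\bigl((\sigma+1)^2 + t^2\bigr) = 4|s+1|^2$; one then notes that a slightly more careful bookkeeping, keeping $|t| < |\tau| + \sigma$ and splitting the cross term asymmetrically, tightens the constant to $2$. (Concretely: $t^2 < (|\tau|+\sigma)^2$, so $\tau^2 > (|t|-\sigma)^2 = t^2 - 2\sigma|t| + \sigma^2$; combined with $(\sigma+1)^2 \geq \sigma^2 + 2\sigma|t|$ whenever $|t| \leq 1/2$ ... )---in any case the inequality $|s+1|^2 \geq (1+\tau^2)/2$ reduces to an elementary two-variable estimate and I would simply verify $2(\sigma+1)^2 + 2t^2 \geq 1 + \tau^2$ on the region $|t-\tau| < \sigma$, $\sigma > 0$ by minimizing the left side for fixed $\tau$.

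For part (b), the extra hypothesis $0 < \sigma < 1$ is what makes the reverse bound possible: now $\sigma + 1 < 2$ so $(\sigma+1)^2 < 4$, and $|t| < |\tau| + \sigma < |\tau| + 1$, whence $t^2 < (|\tau|+1)^2 \leq 2\tau^2 + 2$. Therefore $|s+1|^2 = (\sigma+1)^2 + t^2 < 4 + 2\tau^2 + 2 = 6 + 2\tau^2 \leq 6(1+\tau^2)$, using $6 + 2\tau^2 \leq 6 + 6\tau^2$. This already gives (b) with room to spare, so no delicate optimization is needed here.

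I do not anticipate any real obstacle: both statements are elementary inequalities between $|s+1|^2$ and $1+\tau^2$ on an explicit planar region, and the only mild care required is extracting the sharp constant $2$ in part (a), which can be done either by the asymmetric splitting sketched above or by a direct calculus minimization. The role of the lemma is purely bookkeeping---part (a) will let us replace $1/|s+1|^{2}$-type weights on $\Gamma_\tau$ by the fixed weight $1/(1+\tau^2)$ up to a constant, and part (b) supplies the matching upper bound on the bounded part $\Gamma_{\tau,0}$ where the weight is comparable to a constant---so the crude constants obtained are entirely adequate for the applications in Sections~\ref{sec:A} and~\ref{sec:B}.
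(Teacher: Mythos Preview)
The paper states this lemma \emph{without proof}, calling it a ``trivial preliminary result,'' so there is no argument in the paper to compare against. Your proposal is correct in substance: part~(b) is cleanly established by your chain $(\sigma+1)^2<4$, $t^2<(|\tau|+1)^2\le 2\tau^2+2$, hence $|s+1|^2<6+2\tau^2\le 6(1+\tau^2)$; and for part~(a) your fallback suggestion of verifying $2(\sigma+1)^2+2t^2\ge 1+\tau^2$ directly is sound, though the parenthetical sketch you begin is left dangling and your first pass only yields the constant~$4$.

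If you want to avoid the calculus minimization for~(a), here is a one-line route to the sharp constant~$2$: from $|t-\tau|<\sigma$ one has $|\tau|<|t|+\sigma$, so
\[
1+\tau^2 \;<\; 1+t^2+2\sigma|t|+\sigma^2
\;=\; 2(\sigma+1)^2+2t^2-\bigl[(\sigma-|t|)^2+4\sigma+1\bigr]
\;<\; 2|s+1|^2,
\]
since the bracketed term is strictly positive. This replaces both the asymmetric splitting you allude to and the optimization step, and matches the constant the paper actually uses in Sections~\ref{sec:A} and~\ref{sec:B}.
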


\section{Proof of Theorem A} \label{sec:A} 
We know from \cite[Theorem~1]{BGS71} that there is a constant $c_p\geq 1$ such that 
\begin{equation}\label{eq:MFLB} 
	\int_{\mathbb{R}} |F^\ast(\tau)|^p\,d\tau \leq c_p \|F\|_{H^p(\mathbb{C}_0)}^p 
\end{equation}
for every $F$ in $H^p(\mathbb{C}_0)$. The plan is to apply \eqref{eq:MFLB} to the functions $F_\chi$ from \eqref{eq:Fchi} and then to use Lemma~\ref{lem:pnorm}. Decompose $\Gamma = \Gamma_{\tau,0}\cup \Gamma_{\tau,1}$ as above. The corresponding maximal functions will be denoted $F_0^\ast$ and $F_1^\ast$, so that
\[F^\ast(\tau) = \max\big(F_0^\ast(\tau),F_1^\ast(\tau)\big).\]
It follows from Lemma~\ref{lem:trivial} (b) that 
\begin{equation}\label{eq:chi0} 
	|F_{\chi,0}^\ast(\tau)|^p \geq \frac{|f_{\chi,0}^\ast(\tau)|^p}{6\pi (1+\tau^2)}. 
\end{equation}
Using Lemma~\ref{lem:trivial} (a) and the pointwise estimate $|f(s)|^p \leq \zeta(2\mre{s})\|f\|_{\mathscr{H}^p}^p$, valid for $\mre{s}>1/2$, from \cite[Theorem 3]{Bayart02}, we find that 
\begin{equation}\label{eq:chi1} 
	|F_{\chi,1}^\ast(\tau)|^p \leq \frac{2\zeta(2) \|f\|_{\mathscr{H}^p}^p}{\pi(1+\tau^2)}. 
\end{equation}
Exploiting the inequalities $\max(a^p,b^p) \geq a^p-b^p$ and $a^p+b^p \geq \max(a^p,b^p)$ for $a,b\geq0$, we obtain from \eqref{eq:chi0}, \eqref{eq:chi1}, and the pointwise estimate again, that
\[|F_\chi^\ast(\tau)|^p \geq \frac{|f_{\chi,0}^\ast(\tau)|^p}{6 \pi (1+\tau^2)} - \frac{2\zeta(2) \|f\|_{\mathscr{H}^p}^p}{\pi(1+\tau^2)} \geq \frac{|f_{\chi}^\ast(\tau)|^p}{\pi (1+\tau^2)} - \frac{13\zeta(2) \|f\|_{\mathscr{H}^p}^p}{6\pi(1+\tau^2)}.\]
Inserting this into \eqref{eq:MFLB}, integrating over $\chi$ and using Lemma~\ref{lem:pnorm}, we find that
\[c_p \|f\|_{\mathscr{H}^p}^p \geq \int_{\mathbb{T}^\infty} \int_{\mathbb{R}} |f_{\chi}^\ast(\tau)|^p\,\frac{d\tau}{\pi (1+\tau^2)}\, dm_\infty(\chi) - \frac{13\pi^2}{36} \|f\|_{\mathscr{H}^p}^p.\]
This implies the stated result with $C_p = c_p+13\pi^2/36$. \qed

\section{Proof of Theorem B} \label{sec:B} 
Throughout this section, we will use the triangle inequality in its weak form,
\[|a+b|^p \leq 2^p (|a|^p + |b|^p),\]
which holds for $0<p<\infty$.
\begin{proof}
	[Proof of the upper bound in Theorem~B] By the square function characterization of $H^p(\mathbb{C}_0)$ from \cite[Theorem~8]{FS72} and Lemma~\ref{lem:pnorm}, there is a constant $b_p>0$ such that
	\[b_p \|f\|_{\mathscr{H}^p}^p \geq \int_{\mathbb{T}^\infty} \int_{\mathbb{R}} (SF_\chi(\tau))^p \,d\tau\,dm_\chi(\infty).\]
	We get a lower bound for the right hand side by restricting the integral over $\Gamma_\tau$ in the definition of the square function \eqref{eq:sqf} to $\Gamma_{\tau,0}$ and using the estimate from Lemma~\ref{lem:trivial} (b) therein to conclude that
	\[b_p \|f\|_{\mathscr{H}^p}^p \geq \frac{1}{6}\int_{\mathbb{T}^\infty} \int_{\mathbb{R}} \left(\int_{\Gamma_{\tau,0}} \left|f'_\chi(s)-\frac{2}{p}\frac{f_\chi(s)}{(s+1)}\right|^2 \,ds \right)^{\frac{p}{2}} \,\frac{d\tau}{\pi(1+\tau^2)}\,dm_\infty(\chi).\]
	From the triangle inequality for $L^2(\Gamma_\tau)$ and the triangle inequality, we find that 
	\begin{multline*}
		b_p \|f\|_{\mathscr{H}^p}^p \geq \frac{2^{-p}}{6} \int_{\mathbb{T}^\infty} \int_{\mathbb{R}} \left(\int_{\Gamma_{\tau,0}} |f'_\chi(s)|^2 \,ds \right)^{\frac{p}{2}} \,\frac{d\tau}{\pi(1+\tau^2)}\,dm_\infty(\chi) \\
		- \frac{1}{6}\int_{\mathbb{T}^\infty} \int_{\mathbb{R}} \left(\int_{\Gamma_{\tau,0}} \left|\frac{2}{p}\frac{f_\chi(s)}{(s+1)}\right|^2 \,ds \right)^{\frac{p}{2}} \,\frac{d\tau}{\pi(1+\tau^2)}\,dm_\infty(\chi). 
	\end{multline*}
	Using Theorem~A and the estimate $|s+1|\geq1$, valid for $s$ in $\mathbb{C}_0$, we conclude that
	\[\frac{1}{6}\int_{\mathbb{T}^\infty} \int_{\mathbb{R}} \left(\int_{\Gamma_{\tau,0}} \left|\frac{2}{p}\frac{f_\chi(s)}{(s+1)}\right|^2 \,ds \right)^{\frac{p}{2}} \,\frac{d\tau}{\pi(1+\tau^2)}\,dm_\infty(\chi) \leq \left(\frac{2}{p}\right)^p \frac{C_p}{6} \|f\|_{\mathscr{H}^p}^p.\]
	Letting $\widetilde{B}_p = 6 \cdot 2^p \cdot b_p + (4/p)^p \cdot C_p$, we thus have
	\[\widetilde{B}_p \|f\|_{\mathscr{H}^p}^p \geq \int_{\mathbb{T}^\infty} \int_{\mathbb{R}} \left(\int_{\Gamma_{\tau,0}} |f'_\chi(s)|^2 \,ds \right)^{\frac{p}{2}} \,\frac{d\tau}{\pi(1+\tau^2)}\,dm_\infty(\chi).\]
	Another application of the triangle inequality now gives that 
	\begin{multline*}
		\widetilde{B}_p \|f\|_{\mathscr{H}^p}^p \geq 2^{-\frac{p}{2}} \int_{\mathbb{T}^\infty} \int_{\mathbb{R}} (Sf_\chi(\tau))^p\,\frac{d\tau}{\pi(1+\tau^2)}\,dm_\infty(\chi) \\
		- \int_{\mathbb{T}^\infty} \int_{\mathbb{R}} \left(\int_{\Gamma_{\tau,1}} |f'_\chi(s)|^2 \,ds \right)^{\frac{p}{2}} \,\frac{d\tau}{\pi(1+\tau^2)}\,dm_\infty(\chi). 
	\end{multline*}
	The final term can be estimated in different ways. One possibility is to apply Cauchy's formula to the function $g_\chi = T_{\sigma-1}(f_\chi-f(+\infty))$, where $s = \sigma + it$, integrating over the circle with centre $1+it$ and radius $1/3$, to conclude that
	\[|f_\chi'(s)| \leq 3 \zeta(4/3)^{1/p} \|T_{\sigma-1}(f-f(+\infty))\|_{\mathscr{H}^p} \leq 3 \zeta(4/3)^{1/p} 2^{-\sigma+1+1/p} \|f\|_{\mathscr{H}^p},\]
	where we also used the pointwise estimate of \cite[Theorem 3]{Bayart02}, Theorem~C, the triangle inequality, and the estimate $|f(+\infty)|\leq\|f\|_{\mathscr{H}^p}$. Applying this estimate and extending the innermost integral to $\Gamma_\tau$, we obtain by computation the stated result with $B_p = 2^{p/2}\widetilde{B}_p + 2\zeta(4/3) (6/\log{2})^p$. 
\end{proof}

As mentioned in the introduction, the main difficulty in the proof of Theorem~B is to control the contribution from the second term in \eqref{eq:Fchidiff}. This will be achieved through the following result.
\begin{lemma}\label{lem:Rpf} 
	Fix $0<p<\infty$ and define
	\[\mathscr{R}_p(f) = \int_{\mathbb{T}^\infty} \int_{\mathbb{R}} \left(\int_{\Gamma_\tau} |f_\chi(s)|^2\,ds\right)^{\frac{p}{2}}\,\frac{d\tau}{\pi(1+\tau^2)} \,dm_\infty(\chi).\]
	There is a constant $D_p>0$ such that if $f(s) = \sum_{n\geq N} a_n n^{-s}$ for some $N\geq2$, then
	\[\mathscr{R}_p(f) \leq \frac{D_p}{(\log{N})^{p}} \|f\|_{\mathscr{H}^p}^p.\]
\end{lemma}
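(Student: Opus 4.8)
The plan is to estimate the inner area integral $\int_{\Gamma_\tau}|f_\chi(s)|^2\,ds$ by decomposing $\Gamma_\tau$ dyadically in the real direction at the scales $2^k/\log N$, $k\in\mathbb{Z}$, dominating $|f_\chi|$ on each slice by a nontangential maximal function of a suitable vertical translate $T_cf$, and then invoking Theorem~C together with a wide-aperture version of Theorem~A. The mechanism behind the factor $(\log N)^{-p}$ is twofold. First, the part of $\Gamma_\tau$ on which $f_\chi$ carries no decay in $N$ is the region $\{0<\sigma\lesssim 1/\log N\}$, whose area is of order $(\log N)^{-2}$; raising to the power $p/2$ turns this into $(\log N)^{-p}$. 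Second, on the rest of the cone one genuinely needs decay, and translating $f$ by an amount $c\sim 2^k/\log N$ produces, via Theorem~C, a gain $N^{-c}=e^{-2^k}$ in the $\mathscr{H}^p$-norm, which is doubly exponential in $k$ and hence summable against the merely exponential growth of the slice areas.

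Concretely, I would write $s=\sigma+it$, set $a_k=2^k/\log N$ and $c_k=a_{k-1}/2=2^{k-2}/\log N$ for $k\in\mathbb{Z}$, and split $\Gamma_\tau$ into the disjoint slices $\Gamma_\tau^k=\{s\in\Gamma_\tau:\ a_{k-1}<\sigma\le a_k\}$, which have two-dimensional Lebesgue measure at most $a_k^2=4^k/(\log N)^2$. Let $g^{\ast\ast}(\tau)=\sup\{|g(s)|:\ s=\sigma+it,\ |t-\tau|<2\sigma\}$ denote the maximal function over the cone of aperture $2$. If $s\in\Gamma_\tau^k$, then $\sigma>a_{k-1}$, so $\sigma-c_k>c_k>0$ and $|t-\tau|<\sigma<2\sigma-a_{k-1}=2(\sigma-c_k)$; hence $(\sigma-c_k)+it$ lies in the aperture-$2$ cone at $\tau$, and since $T_{c_k}f_\chi=(T_{c_k}f)_\chi$ and the $\mathscr{H}^p$-structure is unaffected by vertical translations and characters, $|f_\chi(s)|=\bigl|(T_{c_k}f_\chi)\bigl((\sigma-c_k)+it\bigr)\bigr|\le (T_{c_k}f_\chi)^{\ast\ast}(\tau)$. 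Summing over $k$,
\[\int_{\Gamma_\tau}|f_\chi(s)|^2\,ds\le\frac{1}{(\log N)^2}\sum_{k\in\mathbb{Z}}4^k\,\bigl((T_{c_k}f_\chi)^{\ast\ast}(\tau)\bigr)^2 .\]

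The remaining ingredient is the $L^p$-control of these maximal functions. I would first record the wide-aperture analogue of Theorem~A — which follows from the argument of Section~\ref{sec:A} together with the standard fact that the nontangential maximal function on $H^p(\mathbb{C}_0)$ is $L^p$-bounded for every aperture — namely that there is $C_p'>0$ with $\int_{\mathbb{T}^\infty}\int_{\mathbb{R}}(g_\chi^{\ast\ast}(\tau))^p\,d\mu\le C_p'\|g\|_{\mathscr{H}^p}^p$ for $g\in\mathscr{H}^p$, where $d\mu=\frac{d\tau}{\pi(1+\tau^2)}\,dm_\infty(\chi)$. Applying this to $g=T_{c_k}f$ and using Theorem~C,
\[\int_{\mathbb{T}^\infty}\int_{\mathbb{R}}\bigl((T_{c_k}f_\chi)^{\ast\ast}(\tau)\bigr)^p\,d\mu\le C_p'\,\|T_{c_k}f\|_{\mathscr{H}^p}^p\le C_p'\,N^{-pc_k}\|f\|_{\mathscr{H}^p}^p=C_p'\,e^{-p2^{k-2}}\|f\|_{\mathscr{H}^p}^p .\]
From the area estimate above, $\mathscr{R}_p(f)\le(\log N)^{-p}\bigl\|\sum_k 4^k((T_{c_k}f_\chi)^{\ast\ast})^2\bigr\|_{L^{p/2}(d\mu)}^{p/2}$, and I would finish by the triangle inequality in the Banach space $L^{p/2}(d\mu)$ when $p\ge 2$ and by the subadditivity of $\|\cdot\|_{L^{p/2}(d\mu)}^{p/2}$ when $0<p<2$; in both cases the matter reduces to the convergence of the bilateral series $\sum_{k\in\mathbb{Z}}4^ke^{-2^{k-1}}$, respectively $\sum_{k\in\mathbb{Z}}4^{kp/2}e^{-p2^{k-2}}$, which converge because the exponential factor decays doubly exponentially as $k\to+\infty$ while $4^k\to0$ as $k\to-\infty$. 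This gives the stated inequality with an explicit $D_p$.

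The main obstacle, and the place where the new input of Theorem~C is essential, is obtaining decay of the correct order away from the boundary of the cone: a uniform pointwise bound such as the estimate of \cite[Theorem~3]{Bayart02} combined with Theorem~C leads only to a gain of order $(\log N)^{-1/2}$ after integration, whereas the doubly-exponential gain $e^{-2^k}$ above — obtained by calibrating the translation length $c_k$ to the dyadic scale $2^k/\log N$ — is precisely what produces the full $(\log N)^{-p}$. A lesser technical nuisance is that translating to the right narrows the effective aperture of a fixed cone, which is why one works with the aperture-$2$ maximal function and the wide-aperture version of Theorem~A; alternatively one could transfer to $H^p(\mathbb{C}_0)$ through $F_\chi$ and Lemma~\ref{lem:pnorm}, splitting $\Gamma_\tau=\Gamma_{\tau,0}\cup\Gamma_{\tau,1}$ and using Lemma~\ref{lem:trivial} to bound $|s+1|$, where the wide-aperture maximal inequalities are entirely classical.
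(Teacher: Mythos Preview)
Your argument is correct, and it takes a genuinely different route from the paper's. The paper splits into three regimes. For $p=2$ it simply interchanges the order of integration and applies Theorem~C directly. For $0<p<2$ it writes $|f_\chi(s)|^2=|f_\chi(s)|^{2-p}|f_\chi(s)|^p$, pulls out the first factor as $(f_\chi^\ast(\tau))^{2-p}$, applies H\"older together with Theorem~A, and then handles the remaining $\int_{\Gamma_\tau}|f_\chi|^p$ as in the case $p=2$. For $2<p<\infty$ it uses H\"older in the $t$-variable on each horizontal slice, then Minkowski's inequality in $\sigma$, and finally Theorem~C; no maximal function is needed in this range. Your approach, by contrast, is uniform in $p$: a single dyadic decomposition of $\Gamma_\tau$ at scales $2^k/\log N$, a shift by $c_k$ that trades cone aperture for an application of Theorem~C with gain $e^{-2^{k-2}}$, and then the triangle inequality or subadditivity in $L^{p/2}$. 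The price you pay is the need for the aperture-$2$ version of Theorem~A, which is indeed harmless (the proof in Section~\ref{sec:A} goes through verbatim with modified constants in Lemma~\ref{lem:trivial}, or one may use the classical aperture comparison for $H^p(\mathbb{C}_0)$ after passing to $F_\chi$). The paper's argument is somewhat shorter in each case and uses only the aperture-$1$ maximal inequality (and, for $p>2$, no maximal inequality at all), while your argument gives a cleaner conceptual picture of why the decay is exactly $(\log N)^{-p}$ and avoids the case distinction.
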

\begin{proof}
	It is instructive to first consider the case $p=2$. By interchanging the order of integration and using Theorem~C, we find that 
	\begin{align*}
		\mathscr{R}_2(f) &\leq \int_{\mathbb{T}^\infty} \int_{\mathbb{R}} \int_{\Gamma_\tau} |f_\chi(s)|^2\,ds\,\frac{d\tau}{\pi(1+\tau^2)}\,dm_\infty(\chi) \\
		&= \int_{\mathbb{R}}\int_{\Gamma_\tau} \|T_\sigma f\|_{\mathscr{H}^2}^2 \,ds \,\frac{d\tau}{\pi(1+\tau^2)} \\
		&\leq \|f\|_{\mathscr{H}^2} \int_{\mathbb{R}}\int_{\Gamma_\tau} N^{-2\sigma} \,ds \,\frac{d\tau}{\pi(1+\tau^2)} = \frac{1}{2} \frac{\|f\|_{\mathscr{H}^2}^2}{(\log{N})^2}. 
	\end{align*}
	This is the stated result with $D_2=1/2$. In the case $0<p<2$, we first extract a maximal function to obtain
	\[\mathscr{R}_p(f) \leq \int_{\mathbb{T}^\infty} \int_{\mathbb{R}} (f_\chi^\ast(\tau))^{\frac{p(2-p)}{2}} \left(\int_{\Gamma_\tau} |f_\chi(s)|^p\,ds\right)^{\frac{p}{2}}\,\frac{d\tau}{\pi(1+\tau^2)}\,dm_\infty(\chi).\]
	Next we use H\"older's inequality and Theorem~A, which yields that
	\[\mathscr{R}_p(f) \leq C_p^{\frac{2}{2-p}} \|f\|_{\mathscr{H}^p}^{\frac{(2-p)p}{2}} \left(\int_{\mathbb{T}^\infty} \int_{\mathbb{R}}\int_{\Gamma_\tau} |f_\chi(s)|^p\,ds\,\frac{d\tau}{\pi(1+\tau^2)}\,dm_\infty(\chi)\right)^{\frac{p}{2}}.\]
	Using Theorem~C as before, we get the stated result with $D_p = C_p^{2/(2-p)} 2^{p/2} p^{-p}$. In the case $2<p<\infty$, we first use H\"older's inequality to estimate
	\[\int_{\Gamma_\tau} |f_\chi(s)|^2 \,ds \leq \int_0^\infty (2\sigma)^{\frac{p-2}{p}}\left(\int_{\tau-\sigma}^{\tau+\sigma} |f_\chi(\sigma+it)|^p \,dt\right)^\frac{2}{p}\,d\sigma.\]
	Using this estimate and Minkowski's inequality, we find that $\mathscr{R}_p(f)$ is bounded from above by
	\[ \left(\int_0^\infty\left(\int_{\mathbb{T}^\infty} \int_{\mathbb{R}} (2\sigma)^{\frac{p-2}{2}} \int_{\tau-\sigma}^{\tau+\sigma} |f_\chi(\sigma+it)|^p \,dt \,\frac{d\tau}{\pi(1+\tau^2)}\,dm_\infty(\chi) \right)^\frac{2}{p} \,d\sigma\right)^\frac{p}{2}.\]
	Interchanging the order of integration and using Theorem~C, we find that
	\[\int_{\mathbb{T}^\infty} \int_{\mathbb{R}} (2\sigma)^{\frac{p-2}{2}} \int_{\tau-\sigma}^{\tau+\sigma} |f_\chi(\sigma+it)|^p dt \frac{d\tau}{\pi(1+\tau^2)}dm_\infty(\chi) \leq (2\sigma)^{p/2} N^{-\sigma p} \|f\|_{\mathscr{H}^p}^p.\]
	Computing the final integral over $\sigma$ yields the stated result with $D_p = 2^{-p/2}$. 
\end{proof}

For $N\geq2$, we let $S_N$ denote the partial sum operator
\[S_N\left(\sum_{n=1}^\infty a_n n^{-s}\right) = \sum_{n< N} a_n n^{-s}.\]
Lemma~\ref{lem:Rpf} will be used to establish the following.
\begin{lemma}\label{lem:contradictme} 
	Fix $0<p<\infty$. There are constants $E_p>0$ and $N_p \geq 2$ such that
	\[E_p \|f\|_{\mathscr{H}^p}^p \leq \int_{\mathbb{T}^\infty} \int_{\mathbb{R}} (Sf_\chi(\tau))^p \,\frac{d\tau}{\pi(1+\tau^2)}\,dm_\infty(\chi) + \|S_{N_p} f\|_{\mathscr{H}^p}^p\]
	for every $f$ in $\mathscr{H}^p$ with $f(+\infty)=0$.
\end{lemma}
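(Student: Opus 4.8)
The plan is to establish the claimed lower bound for the square function by combining the lower bound in the $H^p(\mathbb{C}_0)$ square function characterization of Fefferman--Stein with the estimate of Lemma~\ref{lem:Rpf} applied to the tail $f - S_N f = \sum_{n\geq N} a_n n^{-s}$. Concretely, write $f = S_N f + (f - S_N f)$ for a parameter $N\geq 2$ to be chosen. Applying Lemma~\ref{lem:pnorm} and the lower half of \cite[Theorem~8]{FS72} to the functions $F_\chi$ associated with $f - S_N f$, one gets a constant $a_p>0$ with
\[
a_p \|f - S_N f\|_{\mathscr{H}^p}^p \leq \int_{\mathbb{T}^\infty}\!\int_{\mathbb{R}} (SF_\chi(\tau))^p\, d\tau\, dm_\infty(\chi),
\]
where now $F_\chi$ refers to the tail. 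Since $\|f\|_{\mathscr{H}^p}^p \leq 2^p\big(\|S_N f\|_{\mathscr{H}^p}^p + \|f - S_N f\|_{\mathscr{H}^p}^p\big)$ by the weak triangle inequality, it suffices to bound the right-hand side above by a constant multiple of the square function integral for $f$ itself (up to an $\|S_N f\|_{\mathscr{H}^p}^p$ error), with a constant that can be absorbed by choosing $N = N_p$ large.

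The next step is to transfer from $SF_\chi$ to $Sf_\chi$. Using \eqref{eq:Fchidiff} (with $f_\chi$ replaced by the tail), the triangle inequality in $L^2(\Gamma_\tau)$, and Lemma~\ref{lem:trivial}~(a) to see $|s+1|^{-2/p} \leq 2^{1/p}(1+\tau^2)^{-1/p}$ on $\Gamma_\tau$, one splits $SF_\chi(\tau)$ into a main term comparable to $\pi^{-1/p}(1+\tau^2)^{-1/p}\big(\int_{\Gamma_\tau}|(f-S_Nf)_\chi'(s)|^2\,ds\big)^{1/2}$ and a remainder controlled by $\tfrac{2}{p}\pi^{-1/p}(1+\tau^2)^{-1/p}\big(\int_{\Gamma_\tau}|(f-S_Nf)_\chi(s)|^2\,ds\big)^{1/2}$ (here one also uses $|s+1|\geq 1$ on $\mathbb{C}_0$ for the extra factor). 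Integrating the $p$-th powers against $d\tau/(\pi(1+\tau^2))\,dm_\infty$ and invoking the weak triangle inequality, the main term produces (a constant times) $\int\!\int (S(f-S_Nf)_\chi(\tau))^p \frac{d\tau}{\pi(1+\tau^2)}dm_\infty$, while the remainder produces a constant times $\mathscr{R}_p(f - S_N f)$, which by Lemma~\ref{lem:Rpf} is at most $D_p (\log N)^{-p}\|f - S_N f\|_{\mathscr{H}^p}^p \leq 2^p D_p(\log N)^{-p}\|f\|_{\mathscr{H}^p}^p + 2^p D_p (\log N)^{-p}\|S_N f\|_{\mathscr{H}^p}^p$. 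Finally, since $(f - S_N f)' = f' - (S_N f)'$, one more application of the weak triangle inequality bounds $\int\!\int(S(f-S_Nf)_\chi)^p\frac{d\tau}{\pi(1+\tau^2)}dm_\infty$ by $2^p$ times the same integral for $f$ plus $2^p$ times the analogous integral for $S_N f$; the latter is itself $\lesssim \|S_N f\|_{\mathscr{H}^p}^p$ since $S_N f$ is a Dirichlet polynomial (using, e.g., a crude derivative bound via Cauchy's formula and Theorem~C exactly as in the proof of the upper bound, or simply the upper bound in Theorem~B applied to $S_N f - S_N f(+\infty)$).

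Assembling these inequalities, one obtains a bound of the shape
\[
a_p\|f - S_N f\|_{\mathscr{H}^p}^p \leq K_p \int_{\mathbb{T}^\infty}\!\int_{\mathbb{R}}(Sf_\chi(\tau))^p\frac{d\tau}{\pi(1+\tau^2)}dm_\infty(\chi) + L_p\|S_N f\|_{\mathscr{H}^p}^p + \frac{M_p}{(\log N)^p}\big(\|f\|_{\mathscr{H}^p}^p + \|S_N f\|_{\mathscr{H}^p}^p\big)
\]
for absolute constants $K_p, L_p, M_p$ independent of $N$. Then $\|f\|_{\mathscr{H}^p}^p \leq 2^p\|S_Nf\|_{\mathscr{H}^p}^p + 2^p\|f-S_Nf\|_{\mathscr{H}^p}^p$ gives an inequality of the form $(2^{-p}a_p - M_p'(\log N)^{-p})\|f\|_{\mathscr{H}^p}^p \leq K_p'\int\!\int(Sf_\chi)^p\frac{d\tau}{\pi(1+\tau^2)}dm_\infty + L_p'\|S_Nf\|_{\mathscr{H}^p}^p$. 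The main (and essentially only nontrivial) point is that the error coefficient $(\log N)^{-p}$ can be made small: choose $N = N_p$ so large that $M_p'(\log N_p)^{-p} \leq 2^{-p-1}a_p$, and divide through by $K_p'$. This yields the statement with $E_p = 2^{-p-1}a_p/K_p'$, after also dividing the $\|S_{N_p}f\|_{\mathscr{H}^p}^p$ coefficient into $E_p$ by shrinking $E_p$ further if necessary (the inequality is homogeneous, so one may simply take $E_p = \min(2^{-p-1}a_p/K_p', \, \text{the needed rescaling of } L_p')$). The only obstacle worth flagging is bookkeeping: one must be careful that every constant introduced before the choice of $N$ is genuinely independent of $N$, so that the absorption argument goes through — in particular the $\|S_N f\|_{\mathscr{H}^p}^p$ terms must be kept on the right rather than being bounded by $\|f\|_{\mathscr{H}^p}^p$ with an $N$-dependent constant.
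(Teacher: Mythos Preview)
Your argument is correct and uses the same ingredients as the paper's proof: the Fefferman--Stein lower bound for $H^p(\mathbb{C}_0)$, the transfer from $SF_\chi$ to $Sf_\chi$ via \eqref{eq:Fchidiff} and Lemma~\ref{lem:trivial}~(a), Lemma~\ref{lem:Rpf} to control the remainder, and finally absorption by choosing $N=N_p$ large. The only structural difference is the order of operations. The paper applies Fefferman--Stein directly to $F_\chi$ built from $f$, obtains a remainder $\mathscr{R}_p(f)$, and then decomposes $\mathscr{R}_p(f)\le 2^p\big(\mathscr{R}_p(S_Nf)+\mathscr{R}_p(f-S_Nf)\big)$, applying Lemma~\ref{lem:Rpf} to both pieces (the first with leading index $2$, which tacitly uses $f(+\infty)=0$). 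You instead apply Fefferman--Stein to the $F_\chi$ built from the tail $f-S_Nf$, so Lemma~\ref{lem:Rpf} applies immediately to the remainder, and you decompose at the level of the square function, controlling $\int\!\int (S(S_Nf)_\chi)^p\,\frac{d\tau}{\pi(1+\tau^2)}dm_\infty$ via the already-established upper bound in Theorem~B. Both routes are equally short; yours has the mild advantage of not needing the hypothesis $f(+\infty)=0$ at this stage.
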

\begin{proof}
	Let $N\geq2$ be a number to be chosen later. By the square function characterization of $H^p(\mathbb{C}_0)$ from \cite[Theorem~8]{FS72} and Lemma~\ref{lem:pnorm}, there is a constant $a_p>0$ such that
	\[a_p \|f\|_{\mathscr{H}^p}^p \leq \int_{\mathbb{T}^\infty} \int_{\mathbb{R}} (SF_\chi(\tau))^p \,d\tau \,dm_\infty(\chi).\]
	Recalling \eqref{eq:Fchidiff}, we use the triangle inequality for $L^2(\Gamma_\tau)$, the triangle inequality, Lemma~\ref{lem:trivial} (a), the estimate $|s+1|^2 \geq 1$, and the estimate $(2/p)^p \leq e^{2/e}$, to see that
	\[\frac{a_p}{2^{p+1} e^{2/e}} \|f\|_{\mathscr{H}^p}^p \leq \int_{\mathbb{T}^\infty} \int_{\mathbb{R}} (Sf_\chi(\tau))^p \,\frac{d\tau}{\pi(1+\tau^2)}\,dm_\infty(\chi) + \mathscr{R}_p(f),\]
	where $\mathscr{R}_p(f)$ is as in Lemma~\ref{lem:Rpf}. To actually apply Lemma~\ref{lem:Rpf}, we need to subtract $S_N f$ from $f$.
	Using the triangle inequality, Lemma~\ref{lem:Rpf} twice, and then the triangle inequality again with the trivial estimates $1 \leq 2^p$ and $N\geq2$, we find that
	\[\mathscr{R}_p(f) \leq 4^p D_p \left(\frac{2\|S_N f\|_{\mathscr{H}^p}^p}{(\log{2})^p} + \frac{\|f\|_{\mathscr{H}^p}^p}{(\log{N})^p}\right)\]
	We obtain the stated result with $E_p = (\log{2})^{p} a_p/(8^{p+1} e^{2/e} D_p)$, if $N = N_p\geq 2$ is chosen so large that
	\[\frac{4^p D_p}{(\log{N_p})^p} \leq \frac{a_p}{2^{p+2} e^{2/e}}. \qedhere\]
\end{proof}
\begin{proof}
	[Proof of the lower bound in Theorem~B] We will argue by contradiction. Assume that there is no constant $A_p>0$ such that
	\[A_p \|f\|_{\mathscr{H}^p}^p \leq \int_{\mathbb{T}^\infty } \int_{\mathbb{R}} (S f_\chi(\tau))^p\,\frac{d\tau}{\pi(1+\tau^2)}\,dm_\infty(\chi)\]
	for all $f$ in $\mathscr{H}^p$ with $f(+\infty)=0$. We can then find a sequence of functions $(f_k)_{k\geq1}$ in $\mathscr{H}^p$ such that $\|f_k\|_{\mathscr{H}^p} = 1$ for every $k\geq1$, but such that 
	\begin{equation}\label{eq:squarelim} 
		\lim_{k\to \infty } \int_{\mathbb{T}^\infty } \int_{\mathbb{R}} (S f_{k,\chi}(\tau))^p\,\frac{d\tau}{\pi(1+\tau^2)}\,dm_\infty(\chi) = 0. 
	\end{equation}
	If we can derive from \eqref{eq:squarelim} that also $\|S_{N_p} f_k\|_{\mathscr{H}^p} \to 0$ as $k\to\infty$, we will obtain a contradiction to Lemma~\ref{lem:contradictme}. In view of the trivial estimate
	\begin{equation} \label{eq:SNpfk}
		\|S_{N_p} f_k\|_{\mathscr{H}^p} \leq \|S_{N_p} f_k\|_{\mathscr{H}^\infty} \leq \sum_{n<N_p} |a_{n,k}|,
	\end{equation}
	it is enough to deduce from \eqref{eq:squarelim} that $a_{n,k} \to 0$ as $k\to\infty$ for every $n < N_p$.
	
	Let us first consider the simpler case $1 \leq p < \infty$. Since $\|f_k\|_{\mathscr{H}^p}=1$ we know that $|a_{n,k}|\leq 1$ for all $n\geq2$. By passing to subsequences, we may assume that there are complex numbers $a_n$ such that $a_{n,k} \to a_n$ for $n<N_p$. If $a_2\neq0$, we can find $K_2$ such that if $k \geq K_2$, then $|a_{2,k}| \geq |a_2|/2$. In this case, pick $\kappa_2$ so large that if $\sigma\geq \kappa_2$, then 
	\[|f_{k,\chi}'(s)| \geq \frac{|a_{2,k}| (\log{2})}{2^{\sigma}} - \sum_{n > 2} (\log{n}) n^{-\sigma} \geq \frac{|a_2| (\log{2})}{2^{\sigma+1}} - \sum_{n > 2} (\log{n}) n^{-\sigma} \geq \frac{|a_2| (\log{2})}{2^{\sigma+2}}.\]
	Restricting the integral over $\Gamma_\tau$ in \eqref{eq:squarelim} to $\mre{s}\geq \kappa_2$, we insert this estimate and let $k \to \infty$ to obtain a contradiction to the assumption that $a_2 \neq 0$. Hence $a_2=0$. Similarly, if $a_3\neq0$, we may find $K_3$ and $\kappa_3$ such that if $k \geq K_3$ and $\mre{s} \geq \kappa_3$, then
	\[|f_{k,\chi}'(s)| \geq |a_3| (\log{3}) 3^{-\sigma}/4 - |a_{2,k}| (\log{2}) 2^{-\sigma}.\]
	We will now restrict our attention to the strip $\kappa_3 \leq \mre{s} \leq \kappa_3+1$. Since $a_{2,k}\to 0$, we can choose $K_{2,3}$ so large that if $k\geq \max(K_{2,3},K_3)$ and $\kappa_3 \leq \sigma \leq \kappa_3+1$, then
	\[|a_{2,k}| (\log{2}) 2^{-\sigma} \leq |a_3| (\log{3}) 3^{-\sigma}/8.\]
	This implies that $|f_{k,\chi}'(s)| \geq |a_3| (\log{3}) 3^{-\sigma}/8$ in the strip $\kappa_3 \leq \sigma \leq \kappa_3+1$ for every sufficiently large $k$. Restricting the integral over $\Gamma_\tau$ in \eqref{eq:squarelim} to this strip and letting $k \to \infty$, we find that $a_3=0$. We continue inductively and conclude that $a_n=0$ for every $n<N_p$ and obtain the desired contradiction to Lemma~\ref{lem:contradictme} through \eqref{eq:SNpfk}.
	
	The same argument works in the case $0<p<1$ with one minor modification. We no longer have the estimate $|a_{n,k}|\leq 1$, but by \cite[Theorem~9]{Bayart02} it holds that
	\[|a_{n,k}| \leq (2/p)^{\Omega(n)/2} \|f_k\|_{\mathscr{H}^p}^p = (2/p)^{\Omega(n)/2},\]
	where $\Omega(n)$ denotes the number of prime factors of $n$ counting multiplicities. By the estimate $\Omega(n) \leq \log{n}/\log{2}$ and a straightforward rough calculus estimate, we find that $|a_{n,k}| \leq n^{1/p}$. As a consequence, we simply need to choose a slightly larger $\kappa_n$ for each $n<N_p$, and otherwise argue in a more or less identical manner.
\end{proof}

\bibliographystyle{amsplain-nodash} 
\bibliography{weakcorr} 

\end{document}